\newtheorem{theorem}{Theorem}[section]
\newtheorem{proposition}[theorem]{Proposition}
\newtheorem{corollary}[theorem]{Corollary}
\newtheorem{DEFINITION}[theorem]{Definition}
\newtheorem{REMARK}[theorem]{Remark}
\newtheorem{EXAMPLE}[theorem]{Example}
\newenvironment{definition}{\begin{DEFINITION}\normalfont}{\end{DEFINITION}}
\newenvironment{example}{\begin{EXAMPLE}\normalfont}{\end{EXAMPLE}}
\newenvironment{proof}{\noindent\textbf{Proof }}{\hfill $\Box$\medskip}
\numberwithin{equation}{section}
\numberwithin{table}{section}
\newcolumntype{C}{>{$}c<{$}}
\newcolumntype{L}{>{$}l<{$}}
\newcolumntype{R}{>{$}r<{$}}
\newcommand{\tablehead}[4]{\multicolumn{#1}{#2}{\textbf{#4}}}
\newcommand{\bC}{\mathbb C}
\newcommand{\bH}{\mathbb H}
\newcommand{\bR}{\mathbb R}
\newcommand{\bZ}{\mathbb Z}
\newcommand{\HKT}{\textsc{hkt}\xspace}
\newcommand{\SHKT}{\textsc{shkt}\xspace}
\newcommand{\NK}{\textsc{nk}\xspace}
\newcommand{\Lie}[1]{\operatorname{\textsl{#1}}}
\newcommand{\lie}[1]{\operatorname{\mathfrak{#1}}}
\newcommand{\Lel}[1]{{\mathsf{#1}}}  % Lie algebra element
\newcommand{\GL}{\Lie{GL}}
\newcommand{\sP}{\lie{sp}}
\newcommand{\Sp}{\Lie{Sp}}
\newcommand{\SU}{\Lie{SU}}
\newcommand{\su}{\lie{su}}
\newcommand{\un}{\lie{u}}
\newcommand{\g}{\lie{g}}
\newcommand{\h}{\lie{h}}
\newcommand{\lk}{\lie{k}}
\newcommand{\lr}{\lie{r}}
\newcommand{\ld}{\lie{d}}
\newcommand{\lf}{\lie{f}}
\newcommand{\lp}{\lie{p}}
\newcommand{\hook}{\mathop{\lrcorner}}
\newcommand{\Pg}[1][\g]{\mathcal P_{#1}}
\newcommand{\dP}{d_{\mathcal P}}
\DeclareMathOperator{\CP}{\bC P}
\DeclareMathOperator{\diag}{diag}
\DeclareMathOperator{\trace}{tr}
\DeclareMathOperator{\ad}{ad}
\DeclareMathOperator{\Ad}{Ad}
\DeclareMathOperator{\stab}{stab}
\DeclarePairedDelimiter{\abs}{\lvert}{\rvert}
\DeclarePairedDelimiter{\Span}{\langle}{\rangle}
\DeclarePairedDelimiter{\inpd}{\langle}{\rangle}
\newcommand{\inp}[2]{\inpd{#1,#2}}
\newcommand{\eqbreak}{\\&\qquad}
\newenvironment{smallpmatrix}{\left(\begin{smallmatrix}}{\end{smallmatrix}\right)}
\begin{document}
\thispagestyle{empty}
\begin{small}
  \begin{flushright}
    IMADA Preprint 2010\\
    CP\textsuperscript3-ORIGINS: 2010-52
  \end{flushright}
\end{small}

\begin{center}
  \LARGE\bfseries Homogeneous spaces, multi-moment maps and
  (2,3)-trivial algebras
\end{center}

\begin{center}
  \Large Thomas Bruun Madsen and Andrew Swann
\end{center}

\begin{abstract}
  For geometries with a closed three-form we briefly overview the
  notion of multi-moment maps.  We then give concrete examples of
  multi-moment maps for homogeneous hypercomplex and nearly Kähler
  manifolds.  A special role in the theory is played by Lie algebras
  with second and third Betti numbers equal to zero.  These we call
  (2,3)-trivial.  We provide a number of examples of such algebras
  including a complete list in dimensions up to and including five.
\end{abstract}

\section{Introduction}

Symplectic geometry is a geometrisation of the theory of mechanical
systems.  A symplectic structure on a manifold \( M \) is defined by a
closed \( 2 \)-form \( \omega \) that may be expressed locally as \(
\omega = dq_1 \wedge dp_1 + \dots + dq_n \wedge dp_n \), where \( \dim
M = 2n \).  In this context the concepts of linear and angular
momentum are captured by the notion of moment map corresponding to a
group of symmetries \( G \) of \( M \) preserving \( \omega \).  It is
an equivariant map \( \mu\colon M\to \lie g^* \), to the dual of the
Lie algebra \( \g \) of \( G \), characterised by the equation \(
d\inp \mu{\Lel X} = X \hook \omega \), for each \( \Lel X \in \lie g
\).  Here \( X \) denotes the corresponding vector field on \( M \)
generated by \( \Lel X \in \lie g \).

Developments in string and other field theories with Wess-Zumino terms
\cite{Michelson-S:conformal,Strominger:superstrings,Gates-HR:twisted,Baez-HR:string}
have highlighted the importance of geometries associated to closed \(
3 \)-forms \( c \).  Geometric aspects of these theories are not so
well developed.  In \cite{Madsen-S:multi-moment}, developing
\cite{Madsen:torsion}, we introduce a notion of multi-moment map
adapted to such geometries.  The important features of our definition
are that the target space depends only on the symmetry group \( G \)
and not the underlying manifold \( M \), in contrast to
\cite{Carinena-CI:multisymplectic}, and that existence of such maps
are guaranteed in many circumstances.  After reviewing the definition
and basic properties, the rest of the paper is devoted to giving
examples from hypercomplex and nearly Kähler geometry and to
classifications of a special class of Lie algebras that arises.

\medbreak We say the \( 3 \)-form \( c \in \Omega^3(M) \) defines a
\emph{strong geometry} if \( c \)~is closed, meaning \( dc = 0 \).
The lack of simple canonical descriptions for \( 3 \)-forms means that
in general a non-degeneracy assumption is not appropriate.  However,
if \( X\hook c = 0 \) occurs only for \( X = 0 \in T_xM \), we will
say that the structure is \emph{2-plectic}, following
\cite{Baez-HR:string}.

Suppose \( G \) acts on a strong geometry \( (M,c) \) preserving \( c
\).  Then for each \( \Lel X \in \g \), we have from the Cartan
formula that \( X \hook c \) is a closed \( 2 \)-form.  Similarly, if
\( \Lel X \) and \( \Lel Y \) are two commuting elements of \( \g \),
then \( (X\wedge Y) \hook c = c(X,Y,\cdot) \) is a closed \( 1
\)-form.  If this can be integrated to a function \( \nu_{X\wedge Y}
\), then we have a multi-moment map for the \( \bR^2 \) action
generated by \( X \) and \( Y \).

In general, the space of commuting elements in \( \g \) forms a
complicated variety.  We therefore introduce the \emph{Lie kernel} \(
\Pg \) as the kernel of the map \( \Lambda^2\lie g \to \lie g \)
induced by the Lie bracket \( [\cdot,\cdot] \).  A calculation shows
that for \( \Lel p = \sum_i X_i \wedge Y_i \in \Pg \), the one-form \(
p\hook c \) is closed.  This leads to

\begin{definition} \cite{Madsen-S:multi-moment} A \emph{multi-moment
    map} for the action of a group \( G \) of symmetries of a strong
  geometry \( (M,c) \) is an equivariant map \( \nu\colon M \to \Pg^*
  \) satisfying \( d\inp\nu{\Lel p} = p\hook c \), for all \( \Lel p
  \in \Pg \).
\end{definition}

The following result summarises the important existence results for
multi-moment maps.

\begin{theorem}\textup{\cite{Madsen-S:multi-moment}} Multi-moment maps
  exist for the action of \( G \) on \( (M,c) \) if either
  \begin{asparaenum}[(a)]
  \item\label{item:M-compact} \( M \) is compact with \( b_1(M) = 0
    \),
  \item\label{item:G-compact} \( G \) is compact, \( b_1(M) = 0 \) and
    \( G \) preserves a volume form on \( M \),
  \item\label{item:G-exact} \( c = d b \) with \( b \in \Omega^2(M) \)
    preserved by \( G \), or
  \item\label{item:2-3-trivial} \( b_2(\g) = 0 = b_3(\g) \).
  \end{asparaenum}
\end{theorem}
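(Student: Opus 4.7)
\medskip\noindent\emph{Proof plan.}
In each of the four situations the goal is the same: for every \( \Lel p \in \Pg \) produce a primitive \( \nu_p \in C^\infty(M) \) of the closed 1-form \( p\hook c \), depending linearly and \( G \)-equivariantly on \( \Lel p \). I shall dispatch the cases in order of increasing difficulty.

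Case~(c) admits a direct construction. Set \( \nu_p = \iota_p b \), which for \( \Lel p = \sum_i \Lel X_i \wedge \Lel Y_i \) unfolds to \( \sum_i b(X_i,Y_i) \). The Lie-kernel condition \( \sum_i [\Lel X_i,\Lel Y_i] = 0 \), combined with \( \mathcal L_{X_i}b = 0 = \mathcal L_{Y_i}b \), makes Cartan's formula collapse: a direct expansion of \( db(X_i,Y_i,Z) \) cancels the bracket terms and leaves only \( Z\bigl(b(X_i,Y_i)\bigr) \), so \( d\nu_p = p\hook c \). Linearity is manifest and equivariance follows from the \( G \)-invariance of \( b \).

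In cases~(a) and~(b) the key tool is Hodge theory. Fix a Riemannian metric on \( M \); in~(b) the metric may be averaged against the \( G \)-invariant volume form so as to become \( G \)-invariant. Since \( b_1(M) = 0 \), each closed 1-form on the compact manifold \( M \) has a unique smooth primitive of vanishing integral, and I let \( \nu_p \) denote this distinguished primitive of \( p\hook c \). Linearity in \( \Lel p \) follows by uniqueness. In~(b), uniqueness applied to the two primitives \( \nu_p\circ g \) and \( \nu_{\Ad(g^{-1})\Lel p} \) of the same normalised 1-form \( \Ad(g^{-1})\Lel p \hook c \) yields equivariance. Case~(a) is subtler, as no \( G \)-invariant volume is assumed: the residual constant \( \theta(g,\Lel p) = \nu_p\circ g - \nu_{\Ad(g^{-1})\Lel p} \) defines a group 1-cocycle on \( G \) with values in \( \Pg^* \), which must be shown to be a coboundary so that \( \nu \) may be corrected into an equivariant map. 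The argument for this uses compactness of \( M \) and the vanishing of \( b_1(M) \) in an essential way.

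The substantive case is~(d), where the hypothesis is purely algebraic and no primitive on \( M \) is directly at hand. The plan is an obstruction argument based on Chevalley-Eilenberg cohomology. The short exact sequence
\[
0 \longrightarrow \Pg \longrightarrow \Lambda^2\g \longrightarrow [\g,\g] \longrightarrow 0,
\]
paired with the de Rham data on \( M \), produces a two-stage lifting problem for the assignment \( \Lel p \mapsto p\hook c \): one first extends it to a \( \g \)-equivariant cochain with values in \( \Omega^1(M) \), and then descends it equivariantly to a primitive-valued cochain in \( C^\infty(M) \). The two successive obstructions sit in \( H^3(\g) \) and \( H^2(\g) \), and vanish by hypothesis. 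The main obstacle will be to set up these obstructions cleanly, identify them with the Chevalley-Eilenberg classes of \( \g \) with the right coefficient modules, and promote the resulting \( \g \)-equivariance to genuine \( G \)-equivariance.
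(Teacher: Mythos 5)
This theorem is imported from \cite{Madsen-S:multi-moment} (cited as in preparation) and is not proved in the present paper, so there is no in-paper argument to compare against; I therefore assess your plan on its own terms. Case~(c) is correct and complete: for invariant \( b \) the expansion of \( db(X,Y,Z) \) collapses to \( Z(b(X,Y)) + b([X,Y],Z) \), the bracket term dies on \( \Pg \), and equivariance of \( \Lel p\mapsto \sum_i b(X_i,Y_i) \) is immediate. Cases~(a) and~(b), however, are not in order. You have interchanged which compactness is doing the work: in~(b) the manifold \( M \) is \emph{not} assumed compact, so ``the unique primitive of vanishing integral on the compact manifold \( M \)'' is unavailable there, and ``averaging the metric against the \( G \)-invariant volume form'' is not a meaningful operation --- one averages over the compact group \( G \) with Haar measure. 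What rescues~(b) is compactness of \( G \): choose primitives linearly, note that the failure of equivariance is a \( \Pg^* \)-valued \( 1 \)-cocycle on \( G \), and kill it by averaging over \( G \) (vanishing of \( H^1 \) of a compact group with finite-dimensional coefficients). Conversely in~(a) the group \( G \) is arbitrary, so your closing sentence --- that the cocycle \( \theta \) ``must be shown to be a coboundary'' using compactness of \( M \) and \( b_1(M)=0 \) ``in an essential way'' --- is precisely the entire content of that case and is left unproved; for non-compact \( G \) there is no general vanishing of \( H^1(G;\Pg^*) \), so a specific geometric input (for instance, uniform boundedness of the normalised primitives on the compact \( M \)) must be exploited, and you have not supplied it.

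Case~(d) is the substantive one and you offer only a programme. The mechanism that actually works --- the Kostant--Kirillov--Souriau analogue alluded to in the introduction --- is more concrete than a two-stage lifting through \( 0\to\Pg\to\Lambda^2\g\to[\g,\g]\to0 \). For closed invariant \( c \) one proves the identity \( d\bigl(c(X,Y,Z)\bigr) = \partial(\Lel X\wedge\Lel Y\wedge\Lel Z)\hook c \), where \( \partial\colon\Lambda^3\g\to\Lambda^2\g \) is the Chevalley--Eilenberg boundary. Since \( \Pg \) is exactly the space of Lie-algebra \( 2 \)-cycles, \( b_2(\g)=0 \) gives \( \Pg=\partial(\Lambda^3\g) \), so every \( \Lel p\in\Pg \) acquires the explicit primitive \( x\mapsto c_x(q) \) for any \( \Lel q \) with \( \partial\Lel q=\Lel p \); the hypothesis \( b_3(\g)=0 \) makes this independent of the choice of \( \Lel q \) (one checks, again from \( dc=0 \) and invariance, that \( c \) annihilates \( 3 \)-boundaries pointwise), and equivariance is automatic from naturality of \( \partial \). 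Your sketch gestures at the right cohomology groups but identifies neither the obstruction classes nor the mechanism converting the algebraic hypothesis into actual functions on \( M \); as written it cannot be checked. I would rebuild~(d), and then~(a), around the displayed identity.
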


In this paper, we will give examples related to cases
(\ref{item:M-compact}--\ref{item:G-exact}) in hypercomplex and nearly
Kähler geometries. Underlying the nearly Kähler examples is an
analogue of the Kostant-Kirillov-Souriau construction, which we prove
in \cite{Madsen-S:multi-moment}. This construction is also relevant
for the Lie algebras satisfying the conditions of case
(\ref{item:2-3-trivial}); these we have dubbed as
\emph{(2,3)-trivial}.  This new class replaces that of semi-simple
algebras in the theory of symplectic moment maps, since semi-simple
algebras \( \h \) are characterised by \( b_1(\h) = 0 = b_2(\h) \).
We have found the following structure theorem:

\begin{theorem}\label{thm:2-3-trivial}
  \textup{\cite{Madsen-S:multi-moment}} A Lie algebra \( \g \) is \(
  (2,3) \)-trivial if and only if \( \g \) is solvable, with derived
  algebra \( \lk = \g' \) of codimension one and satisfying \(
  H^1(\lk)^{\g} = 0 = H^2(\lk)^{\g} = H^3(\lk)^{\g} \).
\end{theorem}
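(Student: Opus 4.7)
The plan is to prove the two implications separately; the dimension formula obtained in the easier direction will reappear as a tool in the harder one.

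\emph{Sufficiency.} Assume the right-hand conditions. Choose $e \in \g \setminus \lk$, set $T = \ad e \colon \lk \to \lk$, and let $e^* \in \g^*$ be the dual element vanishing on $\lk$ with $e^*(e) = 1$. Since $[\g,\g] \subseteq \lk$, we have $de^* = 0$. The Chevalley--Eilenberg complex of $\g$ then carries a short exact subcomplex structure
\[
0 \to \Lambda^{*-1}\lk^* \wedge e^* \to \Lambda^* \g^* \to \Lambda^* \lk^* \to 0,
\]
whose associated long exact sequence reads
\[
\cdots \to H^{n-1}(\lk) \xrightarrow{L_T} H^{n-1}(\lk) \to H^n(\g) \to H^n(\lk) \xrightarrow{L_T} H^n(\lk) \to \cdots,
\]
with $L_T$ the operator induced by $T$ on cohomology. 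Because $\lk$ acts trivially on $H^*(\lk)$, $L_T$ is the $\g/\lk$-action and $\ker L_T = H^*(\lk)^\g$. Using $\dim \ker L_T = \dim \operatorname{coker} L_T$ on finite-dimensional spaces, one reads off
\[
b_n(\g) = \dim H^n(\lk)^\g + \dim H^{n-1}(\lk)^\g,
\]
which vanishes for $n = 2, 3$ under the hypotheses.

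\emph{Necessity.} Suppose $\g$ is $(2,3)$-trivial. First I would prove solvability. Take a Levi subalgebra $\lie{s} \subseteq \g$, so $\g = \lie{s} \ltimes \lr$ with $\lr$ the radical. The composition $\lie{s} \hookrightarrow \g \twoheadrightarrow \g/\lr \cong \lie{s}$ is $\Id_{\lie{s}}$, so $H^*(\lie{s}) \to H^*(\g)$ is injective. Each simple ideal of $\lie{s}$ contributes the non-zero Cartan class $(X, Y, Z) \mapsto K([X, Y], Z)$ to $H^3(\lie{s})$, so $H^3(\lie{s}) \neq 0$ whenever $\lie{s} \neq 0$; hence $b_3(\g) = 0$ forces $\lie{s} = 0$ and $\g$ is solvable.

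The next step is to establish the codimension condition. Write $\mathfrak{a} = \g/\g'$, $d = b_1(\g) = \dim \mathfrak{a}$, and use the Hochschild--Serre spectral sequence of $\g' \triangleleft \g$, $E_2^{p,q} = H^p(\mathfrak{a}; H^q(\g')) \Rightarrow H^{p+q}(\g)$. Observing that every class in $H^1(\g)$ vanishes on $\g'$ yields $E_\infty^{0,1} = 0$, so $d_2 \colon H^1(\g')^{\mathfrak{a}} \to \Lambda^2 \mathfrak{a}^*$ is injective; since $H^2(\g) = 0$ it is also surjective, giving $\dim H^1(\g')^{\mathfrak{a}} = \binom{d}{2}$. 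The principal obstacle of the whole argument is to upgrade this to $d \leq 1$. To do so I would combine the further vanishings $E_\infty^{1,1} = E_\infty^{2,1} = 0$ (from $b_2 = 0$) and $E_\infty^{3,0} = E_\infty^{0,3} = 0$ (from $b_3 = 0$) with the elementary bound that $\g' = [\g,\g]$ forces the $\mathfrak{a}$-coinvariants of $\g'/\g''$ to be at most one-dimensional, and derive a contradiction whenever $d \geq 2$, possibly first checking the low cases $d = 2,3$ by hand and then proceeding inductively on $d$.

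Once $d = 1$ is in place, apply the sufficiency formula once more: $b_2(\g) = b_3(\g) = 0$ immediately yields $H^2(\lk)^\g = H^3(\lk)^\g = 0$, while $H^1(\lk)^\g = 0$ follows directly from $\g' = \lk$, which forces $\lk = \lk' + [e, \lk]$ and leaves no non-zero $\g$-invariant element in $(\lk/\lk')^*$.
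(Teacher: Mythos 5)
This theorem is quoted by the paper from the companion work \cite{Madsen-S:multi-moment}; the article above contains no proof of it, so there is nothing to compare against and I can only assess your argument on its own terms. Your sufficiency direction is correct and complete: the Wang-type long exact sequence for the codimension-one ideal \( \lk \subset \g \), together with \( \dim\ker L_T = \dim\operatorname{coker}L_T \), does give \( b_n(\g) = \dim H^n(\lk)^{\g} + \dim H^{n-1}(\lk)^{\g} \). (Note this formula already yields \( H^1(\lk)^{\g}=0 \) from \( b_2(\g)=0 \), so your separate closing argument via \( \lk = \lk' + [e,\lk] \) is redundant, though correct.) The Levi-subalgebra argument for solvability is also fine.

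The genuine gap is the step you yourself flag as the principal obstacle: showing \( \dim(\g/\g') = 1 \). What you offer there is a plan, not a proof, and the one concrete lemma you propose to feed into it is false. The \( \mathfrak{a} \)-coinvariants of \( \g'/\g'' \) are \( \g'/[\g,\g'] \), and the bracket induces a surjection \( \Lambda^2\mathfrak{a} \to \g'/[\g,\g'] \); nothing about \( \g'=[\g,\g] \) bounds this quotient by one dimension --- for the free two-step nilpotent algebra on \( d \) generators it has dimension \( \binom{d}{2} \). Worse, your own spectral-sequence computation (injectivity plus surjectivity of \( d_2\colon H^1(\g')^{\mathfrak{a}} \to \Lambda^2\mathfrak{a}^* \)) shows that under \( b_2(\g)=0 \) this surjection is an isomorphism, so the coinvariants have dimension exactly \( \binom{d}{2} \), contradicting your claimed bound precisely in the regime \( d\geqslant 2 \) you need to exclude. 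Closing the gap requires genuinely using \( b_3(\g)=0 \): for instance, when \( d=2 \) one has \( E_2^{3,0}=\Lambda^3\mathfrak{a}^*=0 \), so \( E_\infty^{1,1}=E_2^{1,1}=H^1(\mathfrak{a};H^1(\g')) \) must vanish, while the Euler-characteristic identity for the rank-two abelian algebra forces \( \dim H^1(\mathfrak{a};M) \geqslant \dim H^0(\mathfrak{a};M) = 1 \), a contradiction; for general \( d \) one needs a Dixmier-type non-vanishing statement (\( H^p(\mathfrak{n};M)\neq 0 \) for all \( 0\leqslant p\leqslant\dim\mathfrak{n} \) whenever \( M^{\mathfrak{n}}\neq 0 \)) or an equivalent substitute, and your proposed ``induction on \( d \)'' has no visible inductive step. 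As written, the necessity direction is incomplete at its central point.
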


In this paper we use this result to give full lists of \( (2,3)
\)-trivial algebras in dimensions at most five and to show that every
nilpotent algebra \( \lk \) of dimension at most six occurs in the
above theorem.  We also give some constructions of infinite families
of \( (2,3) \)-trivial Lie algebras.

\section{A hypercomplex example}
\label{sec:exhc}

An \emph{\HKT structure} on a manifold is given by three complex
structures \( I \), \( J \), \( K = IJ = -JI \) with common Hermitian
metric such that \( Id\omega_I = Jd\omega_J = Kd\omega_K \).  By
\cite[Prop. 6.2]{Cabrera-S:aqH-torsion} it is unnecessary to check
integrability of \( I \), \( J \) and \( K \).  An example of a
homogeneous \HKT manifold is the compact simple Lie group \( \SU(3)
\).  In fact it admits a left-invariant \SHKT structure, meaning that
\( c = -Id\omega_I \) is closed.  As a consequence of left-invariance,
we may think of the \HKT structure on \( \SU(3) \) in terms of a
corresponding algebraic structure on its Lie algebra.  Write \( E_{pq}
\) for the elementary \( 3\times 3 \)-matrix with a \( 1 \) at
position \( (p,q) \).  Then the following \( 8 \) complex matrices
constitute a basis for \( \su(3) \): \( A_1=i(E_{11}{-}E_{22}) \), \(
A_2=i(E_{22}{-}E_{33}) \), \( B_{pq}=E_{pq}{-}E_{qp} \), \(
C_{pq}=i(E_{pq}{+}E_{qp}) \), for \( p=1,2<q=2,3 \).  We write \(
a_1,\dots,c_{23} \) for the dual basis.  Using the formula \(
d\alpha(X,Y)=-\alpha([X,Y]) \), one finds that
\begin{equation}
  \label{eq:extdi}
  \begin{gathered}
    da_1 = - 2b_{12}c_{12} - 2b_{13}c_{13},\
    da_2 = - 2b_{13}c_{13} - 2b_{23}c_{23},\\
    db_{12} = (2a_1 - a_2)c_{12} + b_{13}b_{23} + c_{13}c_{23},\
    db_{13} = (a_1 + a_2)c_{13} - b_{12}b_{23} + c_{12}c_{23},\\
    db_{23} = (-a_1 + 2a_2)c_{23} + b_{12}b_{13} + c_{12}c_{13},\
    dc_{12} = (-2a_1 + a_2)b_{12} - b_{13}c_{23} - b_{23}c_{13},\\
    dc_{13} = (-a_1 - a_2)b_{13} - b_{12}c_{23} + b_{23}c_{12},\
    dc_{23} = (a_1 - 2a_2)b_{23} + b_{12}c_{13} + b_{13}c_{12},
  \end{gathered}
\end{equation}
where \( \wedge \) signs have been omitted.

A metric is provided by minus the Killing form on \( \su(3) \); here
we may take \( g \) to be the map \( (X,Y)\mapsto-\trace(XY) \): \( g
= 2a_1^2 - a_1a_2 + 2( a_2^2 + b_{12}^2 + b_{13}^2 + b_{23}^2 +
c_{12}^2 + c_{13}^2 + c_{23}^2) \).

In \cite[Thm.  4.2]{Joyce:hypercomplex} Joyce proved the existence of
hypercomplex structures on certain compact Lie groups.  For \( \SU(3)
\), Joyce's hypercomplex structure comes from taking a highest root \(
\su(2) \), e.g., the span of \( A_1 \), \( B_{12} \), \( C_{12} \) and
the complement \( \bH+\bR \), \( \bH \cong \Span{ B_{13}, C_{13},
  B_{23}, C_{23} } \) and \( \bR \cong \Span{A_1{+}2A_2} \).
Concretely, let us write \( \mathbf I=A_1 \), \( \mathbf J=B_{12} \)
and \( \mathbf K=C_{12} \).  Then define \( I \) on \( \bH \) to be \(
\ad_{\mathbf I} \).  Similarly \( J \) and \( K \) act on \( \bH \) by
\( \ad_{\mathbf J} \) and \( \ad_{\mathbf K} \), respectively.  On \(
\bR + \su(2)\) the actions of \( I \), \( J \) and \( K \) are given
by \( IV = \mathbf I \), \( JV = \mathbf J \) and \( KV=\mathbf K \),
where \( V=(A_1 + 2A_2)/\sqrt 3 \).  The complex structures \( I \),
\( J \) and \( K \) are now determined completely by the requirement
that they square to \( -1 \).  It is straightforward to check that \(
IJ = K = -JI \).  Computations show that \( I \), \( J \) and \( K \)
are compatible with the metric, meaning \( g(X,Y)=g(IX,IY) \), etc.,
for all \( X,Y\in\su(3) \).  Defining \( 2a_1'=2a_1- a_2 \) and \(
2a_2'= \sqrt3 a_2 \), we find that the non-degenerate \( 2 \)-forms \(
\omega_I = g(I\cdot,\cdot) \), \( \omega_J \) and \( \omega_K \) are
given by
\begin{gather*}
  \omega_I = -a'_1 a'_2 + b_{12} c_{12} + b_{13} c_{13} - b_{23}
  c_{23},\ \omega_J = a_2' b_{12} - a_1' c_{12} - b_{13}
  b_{23} - c_{13} c_{23},\\
  \omega_K = a_2' c_{12}+a_1' b_{12} + b_{13} c_{23} + b_{23} c_{13}.
\end{gather*}
Using \eqref{eq:extdi} it then follows that
\begin{align*}
  d\omega_I&= - \sqrt{3}a_1'(b_{13}c_{13} + b_{23}c_{23}) +
  a_2'(2b_{12}c_{12} + b_{13}c_{13} - b_{23}c_{23})\eqbreak -
  b_{12}b_{13}c_{23} - b_{12}b_{23}c_{13} - b_{13}b_{23}c_{12} -
  c_{12}c_{13}c_{23},\\
  d\omega_J&=2a_1'a_2'c_{12} + a_1'(b_{13}c_{23} + b_{23}c_{13}) -
  a_2'(b_{13}b_{23} + c_{13}c_{23})\eqbreak -
  \sqrt{3}b_{12}b_{13}c_{13} - \sqrt{3}b_{12}b_{23}c_{23} +
  b_{13}c_{12}c_{13} - b_{23}c_{12}c_{23},\\
  d\omega_K&= - 2a_1'a_2'b_{12} + a_1'(b_{13}b_{23} + b_{23}c_{13}) +
  a_2'(b_{13}c_{23} + b_{23}c_{13})\eqbreak +
  \sqrt{3}b_{13}c_{12}c_{13} + \sqrt{3}b_{23}c_{12}c_{23} +
  b_{12}b_{13}c_{13} - b_{12}b_{23}c_{23}.
\end{align*}
From these formulae and the actions of \( I \), \( J \) and \( K \),
we verify the \( \HKT \) condition:
\begin{equation*}
  \begin{split}
    Id\omega_I&=a_1(2b_{12}c_{12} + b_{13}c_{13} - b_{23}c_{23}) -
    a_2(b_{12}c_{12} - b_{13}c_{13} - 2b_{23}c_{23})\\
    &\quad-b_{23}c_{12}c_{13} - b_{13}c_{12}c_{23} -
    b_{12}c_{13}c_{23} - b_{12}b_{13}b_{23}\\
    &= Jd\omega_J = Kd\omega_K.
  \end{split}
\end{equation*}
It is easy to check that \( dc=0 \), and thus \( (\SU(3),g,I,J,K) \)
is indeed an \( \SHKT \) manifold as claimed.

Unfortunately the multi-moment map for the left action of \( \SU(3) \)
on the strong geometry \( (\SU(3),c) \) is trivial.  However, we may
instead turn our attention towards the multi-moment maps \( \nu_I \),
\( \nu_J \) and \( \nu_K \) associated with the \( 3 \)-forms \(
d\omega_I \), \( d\omega_J \) and \( d\omega_K \) on \( \SU(3) \).  As
an example let us consider the multi-moment map \( \nu_I\colon \SU(3)
\to \Pg[\su(3)]^* \), \( \inp{\nu}{\Lel{p}} = \omega_I(p) \).  The
commutation relations for the chosen \( \su(3) \)-basis may be used to
establish a basis for \( \Pg[\su(3)] \leqslant \Lambda^2\su(3) \)
whilst the exterior derivative, via equations \eqref{eq:extdi}, gives
a basis for the submodule \( \su(3)\leqslant\Lambda^2\su(3) \).  We
may now decompose \( \omega_I \) at the identity: \( \omega_I =
\omega_I^{\su(3)} + \omega_I^{\Pg[\relax]} = 2\bigl(b_{12}c_{12} +
b_{13}c_{13}\bigr) + \bigl( - {\sqrt{3}a_1a_2}/2 - (b_{12}c_{12} +
b_{23}c_{23} - b_{13}c_{13})\bigr) \).  It follows that
\begin{equation*}
  \Ad_{g^{-1}}^*\nu_I(g) = -\tfrac{\sqrt3}2a_1a_2 - (b_{12}c_{12} +
  b_{23}c_{23} - b_{13}c_{13}).
\end{equation*}
The image of \( \nu_I \) is the orbit of \( \beta_I = \nu_I(e) \)
under \( \SU(3) \).  This element is preserved by a maximal torus,
invariant under \( I \), and its orbit is a copy of \( F_{1,2}(\bC^3)
\) inside \( \Pg[\su(3)]^* \).  At the algebraic level this is easily
verified:
\begin{equation*}
  \begin{split}
    \ker (\nu_I)_*&= \ker d\nu_I = \bigl\{\, A\in\su(3) :
    d\nu_I(p,A)=0 \text{ for all }
    \Lel p \in\Pg[\su(3)] \,\bigr\} \\
    &= \bigl\{\, A\in\su(3) : c(Ip,IA)=0 \text{ for all }
    \Lel p \in \Pg[\su(3)] \,\bigr\}\\
    &= I\bigl\{\, A\in\su(3) : g([Ip],A)=0\text{ for all } \Lel p \in
    \Pg[\su(3)] \,\bigr\} = [I\Pg[\su(3)]]^\bot = \Span{A_1,V}.
  \end{split}
\end{equation*}
Similarly the images of \( \nu_J \) and \( \nu_K \) are full flags in
\( \bC^3 \).  We find
\begin{alignat*}{3}
  &\begin{aligned}[b] \Ad^*_{g^{-1}}\nu_J(g)&= \tfrac{\sqrt3}2
    \left(2(a_1+a_2)b_{12} -
      (b_{23}c_{13}+b_{13}c_{23})\right)\\
    &\quad
    +\tfrac1{14}\left(2(2a_1-a_2)c_{12}-5(b_{13}b_{23}+c_{13}c_{23})\right),
  \end{aligned}&&\ker(\nu_J)_*=\Span{V,B_{12}},
  \\
  &\begin{aligned}[b] \Ad^*_{g^{-1}}\nu_K(g)&= \tfrac{\sqrt3}{14}
    \left((3a_1+2a_2)c_{12}
      -2(b_{13}b_{23}+c_{13}c_{23})) \right)\\
    &\quad -\tfrac12
    \left(2(8a_1+5a_2)b_{12}-11(b_{13}c_{23}+b_{23}c_{13})\right),
  \end{aligned}
  &\quad&\ker(\nu_K)_*=\Span{V,C_{12}}.
\end{alignat*}
Putting these together, we get an equivariant map \( \underline{\nu} =
(\nu_I,\nu_J,\nu_K) \colon \SU(3) \to (\Pg[\su(3)]^*)^3 \). The image
is the Aloff-Wallach space \( A_{1,1}=\SU(3)/{T^1_{1,1}} \).  The
relatively high dimension of this image indicates that multi-moment
maps could be a useful tool to study homogeneous hyperHermitian
structures.
 
\section{Six-dimensional nearly Kähler manifolds}

A \emph{nearly Kähler structure}, briefly an \NK structure, on a
six-dimensional manifold may be specified by a \( 2 \)-form~\( \sigma
\) and a \( 3 \)-form \( \psi^+ \) whose common pointwise stabiliser
in \( \GL(6,\bR) \) is isomorphic to~\( \SU(3) \).  The \NK condition
is then \( d\sigma = \psi^+ \) and \( d\psi^- = -\tfrac23\sigma^2 \),
where \( \psi^++i\psi^- \in \Lambda^{3,0} \).  We will indicate how
each homogeneous strict \NK six-manifold \( G/H = F_{1,2}(\bC^3) \),
\( \CP(3) \), \( S^3\times S^3 \) and \( S^6 \), as classified by
\citet{Butruille:nK}, may be realised as a 2-plectic orbit \(
G\cdot\beta \) in~\( \Pg^* \).  Let \( \dP\colon \Pg^* \to
\Lambda^3\g^* \) be the map induced by \( d \).  Then our realisation
is such that \( \Psi = \dP\beta \) induces \( \psi^+ \) via \(
\inp{\Psi}{\Lel{X}\wedge\Lel{Y}\wedge\Lel{Z}}=\psi^+(X,Y,Z) \) and \(
\sigma(X,Y) = \beta(\Lel X,\Lel Y) \).

In each case the element \( \beta\in\Pg^* \) must be chosen with some
care.  For instance neither of the \( 3 \) copies \( F_{1,2}(\bC^3)
\subset \Pg[\su(3)]^* \) from section \ref{sec:exhc} behaves in the
manner described above.  On the other hand \( \SU(3) \) acting on the
element \( \beta_1 = b_{12}c_{12} + c_{13}b_{13} + b_{23}c_{23} \in
\Pg[\su(3)]^* \) gives a copy of the full flag with forms \(
\dP\beta_1 \) and \( \beta_1 \) inducing the \NK structure.  The
associated almost complex structure \( J \) is given by \( J(B_{12}) =
C_{12} \), \( J(C_{13}) = B_{13} \), \( J(B_{23}) = C_{23} \).

To obtain \( \CP(3) \), we let \( Sp(2) \) act on \( \beta_2=a_1b_{11}
+ b_{12}r + c_{12}q \) in \( \Pg[\sP(2)]^* \).  The chosen basis for
\( \sP(2) \) consists of the \( 10 \) complex matrices \( A_1 =
i(E_{11} - E_{33}) \), \( A_2 = i(E_{22} - E_{44}) \), \( Q = E_{12} -
E_{21} + E_{34} - E_{43} \), \( R = i(E_{12} + E_{21} - E_{34} -
E_{43}) \), \( B_{k\ell } = E_{k,2+\ell } + E_{\ell ,2+k} -
E_{2+k,\ell } - E_{2+\ell ,k} \), \( C_{k\ell } = i(E_{k,2+\ell } +
E_{\ell ,2+k} + E_{2+k,\ell } + E_{2+\ell ,k}) \), \( 1\leqslant
k\leqslant \ell \leqslant 2 \).  One easily verifies that \(
\stab_{\sP(2)}\beta_1 = \un(1)\oplus\su(2) \), so that, up to discrete
coverings, the orbit of \( \beta_2 \) is \( \CP(3) \). We have
\begin{equation*}
  \begin{gathered}
    da_1 = - 2(4b_{11}c_{11}+b_{12}c_{12}+qr),\
    db_{11} = 2a_1c_{11}+b_{12}q-c_{12}r,\\
    db_{12} = (a_1+a_2)c_{12} + 2(-b_{11}+b_{22})q -
    2(c_{11}+c_{22})r,\\
    dc_{12} = - (a_1 + a_2)b_{12} + 2(b_{11}+b_{22})r +
    2(-c_{11}+c_{22})q,\\
    dq = (a_1 - a_2)r + 2(b_{11}-b_{22})b_{12} +
    2(c_{11}-c_{22})c_{12},\\
    dr = (-a_1 + a_2)q + 2 (c_{11}+c_{22})b_{12} -
    2(b_{11}+b_{22})c_{12}.
  \end{gathered}
\end{equation*}
Computations now show that \( \beta_2 \) and \( \dP\beta_2 \)
determine a \NK structure which has almost complex structure given by
\( J(A_1) = \tfrac12 B_{11} \), \( J(B_{12}) = R \) and \( J(C_{12}) =
Q \).

The homogeneous \NK structure on \( S^3\times S^3 \) is obtained on
the orbit of \( \beta_3 = e_1f_1 + e_2f_2 + e_3f_3 \in
\Pg[\su(2)\oplus\su(2)]^* \).  Here \( e_i \), \( f_i \) denotes a
cyclic basis for \( \su(2)^*\oplus\su(2)^* \), meaning \( de_i =
e_{i+1}e_{i+2} \) and \( df_i = f_{i+1}f_{i+2} \) for \( i \in \bZ/3
\).  One may verify that the associated almost complex structure is
given by \( J(E_i) = (E_i+2F_i)/\sqrt3 \).

Finally we obtain \( S^6 \) as the \( G_2 \)-orbit of the element \(
\beta_4 = b_1c_1 + b_3c_3 + c_4b_4 \in \Pg[\g_2]^* \).  The chosen
basis for \( \g_2 \) is \( A_1 = iH_1 \), \( A_2 = iH_2 \), \( B_a =
X_a - Y_a \), \( C_a = i(X_a + Y_a) \), \( (1\leqslant a\leqslant 6)
\), with the elements \( \{H_1,\dots,Y_6\} \) defined in \cite[Table
22.1]{Fulton-Harris:rep}. Now we have
\begin{equation*}
  \begin{gathered}
    db_1 = (2a_1-a_2)c_1 + b_3b_2 + c_3c_2 + 2(b_4b_3 + c_4c_3) +
    b_4b_5 + c_4c_5,\\
    dc_1 = (-2a_1+a_2)b_1 + c_3b_2 + c_2b_3 + 2(c_4b_3 + c_3b_4) +
    b_4c_5 + b_5c_4,\\
    db_3 = (-a_1+a_2)c_3 + b_2b_1 + c_1c_2 + 2(b_1b_4 + c_1c_4) +
    b_4b_6 + c_4c_6,\\
    dc_3 = (a_1-a_2)b_3 + c_2b_1 + b_2c_1 + b_4c_6 + 2(b_1c_4 +b_4c_1)
    + b_4c_6 + b_6c_4,\\
    db_4 = a_1c_4 + 2(b_3b_1 + c_1c_3) + b_5b_1+c_5c_1 + c_6c_3 +
    b_6b_3,\\
    dc_4 = b_4a_1 + 2(b_1c_3 + b_3c_1) + c_5b_1 + c_1b_5 + c_6b_3 +
    c_3b_6,
  \end{gathered}
\end{equation*}
and it can be verified that \( \beta_4 \) and \( \dP\beta_4 \) induce
a \NK structure on \( S^6 \). In this case one has \( J(B_1) = C_1 \),
\( J(B_3) = C_3 \) and \( J(C_4) = B_4 \).

\medbreak Motivated by these examples, it makes sense to study the
multi-moment map formalism for \NK manifolds with less symmetry.

\begin{table}
  \centering
  \begin{tabular}{LCCR}  
    \hline
    G & \beta & \dP\beta & \mathcal{O}=G\cdot\beta \\
    \hline
    \SU(3) & b_{12}c_{12}{+}c_{13}b_{13}{+}b_{23}c_{23} &
    3(b_{12}(b_{13}c_{23}{+}b_{23}c_{13}){+}c_{12}(b_{13}b_{23}{+}c_{13}c_{23}))
    & F_{1,2}(\bC^3) \\ 
    \Sp(2) & a_1b_{11}{+}b_{12}r{+}c_{12}q &
    {-}3(a_1(b_{12}q-c_{12}r)+2b_{11}(b_{12}c_{12}+qr)) & \CP(3) \\ 
    \SU(2)^2 & e_1f_1{+}e_2f_2{+}e_3f_3 &
    e_{12}f_3+e_{23}f_1+e_{31}f_2-e_1f_{23}-e_2f_{31}-e_3f_{12} &
    S^3\times S^3 \\ 
    G_2 & b_1c_1{+}b_3c_3{+}c_4b_4 &
    6(b_1(b_3c_4{-}c_3b_4){-}c_1(b_3b_4+c_3c_4)) & S^6 \\ 
    \hline
  \end{tabular}
  \caption{Realisations of the six-dimensional nearly Kähler manifolds
    as orbits in Lie kernels.} 
  \label{tab:nK}
\end{table}

\section{Positive gradings of nilpotent algebras}
\label{sec:posgr}

A Lie algebra \( \lk \) is \emph{positively graded} if there is a
vector space direct sum decomposition \( \lk=\lk_1+\dots+\lk_r \) with
\( [\lk_i,\lk_j]\subseteq\lk_{i+j} \) for all \( i \), \( j \).  A
grading of an \( n \)-dimensional Lie algebra \( \lk \) may be
specified in terms of an \( n \) positive integers, see Example
\ref{ex:posgrad}. We have

\begin{proposition}
  Any nilpotent Lie algebra of dimension at most six admits a positive
  grading.  From dimension seven and above, there are nilpotent Lie
  algebras, which do not admit a positive grading.
\end{proposition}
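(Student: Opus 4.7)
The plan is to split the statement into two cases along the dimension threshold and to attack them by different methods.

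For the positive part (dimension at most six), I would invoke the existing classifications of low-dimensional nilpotent Lie algebras (for example those of Magnin and of de~Graaf) and exhibit a positive grading on each member of the list.  The key observation is that, given a basis \( e_1,\dots,e_n \) of \( \lk \), a positive grading is completely determined by assigning positive integer weights \( w_1,\dots,w_n \) subject to the linear constraint \( w_k = w_i + w_j \) whenever the structure constant in \( [e_i,e_j]=\sum_k c_{ij}^k e_k \) is non-zero.  For each isomorphism class in dimensions up to six, these constraints are few in number and admit a positive integer solution essentially by inspection; in most cases the grading induced by the lower central series, with \( w_i \) equal to the lower central depth of \( e_i \), already works.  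The outcome would be a short table of weights, one row per class, with representative examples displayed as in Example \ref{ex:posgrad}.

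For the negative part (dimension at least seven), the key ingredient is that any positive grading \( \lk=\lk_1+\dots+\lk_r \) gives rise to a grading derivation \( D \), namely the semisimple endomorphism acting as multiplication by \( j \) on \( \lk_j \); the derivation property is immediate from \( [\lk_i,\lk_j]\subseteq\lk_{i+j} \), and by construction \( D \) has positive integer eigenvalues and hence is not nilpotent.  Therefore any \emph{characteristically nilpotent} Lie algebra, meaning one whose derivation algebra is itself nilpotent in the sense of Dixmier--Lister, cannot admit a positive grading.  It then suffices to cite the fact, well-established in the structure theory of nilpotent Lie algebras (Favre, Ancochea-Bermúdez--Goze and others), that characteristically nilpotent algebras exist in every dimension from seven onward, the smallest explicit example being seven-dimensional.

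The principal obstacle is the finite but somewhat laborious case analysis underlying the positive part: the classification of six-dimensional nilpotent Lie algebras comprises a moderate number of isomorphism classes and each must be inspected individually, because there is no uniform conceptual reason why every such algebra ought to admit a positive grading.  By contrast, the negative part is conceptually clean once the grading-derivation observation is in place and the required characteristically nilpotent examples are taken from the literature.
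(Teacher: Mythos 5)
Your proposal is correct, and the two halves of it compare differently with the paper. For dimension at most six you do essentially what the paper does: run through the classification of nilpotent algebras and solve the linear weight constraints \( w_k = w_i + w_j \) class by class; this is precisely the content of Table \ref{tab:posgrad} and Example \ref{ex:posgrad}. (One small caution: the filtration by the lower central series need not split as a grading of the algebra itself, so your remark that the lower-central depths ``already work'' is only a heuristic --- several entries of the table, e.g.\ the grading \( 134567 \), are not of that form.) For dimension seven and above, however, you argue along a genuinely different route. The paper deduces non-existence from its companion result that certain seven-dimensional nilpotent algebras cannot occur as the derived algebra of any \( (2,3) \)-trivial algebra, combined with Corollary \ref{cor:extposgr}, which shows that a positively graded \( \lk \) always is such a derived algebra. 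You instead note that a positive grading yields a semisimple derivation with positive integer eigenvalues, hence a non-nilpotent derivation, so that characteristically nilpotent Lie algebras admit no positive grading, and then cite the existence of such algebras in every dimension at least seven. Your argument is self-contained modulo classical literature and handles all dimensions \( \geqslant 7 \) uniformly, whereas the paper's argument is integrated with the \( (2,3) \)-trivial machinery that motivates the whole section and, as written, only exhibits the seven-dimensional obstruction explicitly. Both are valid proofs of the statement.
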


The nilpotent Lie algebras of dimension at most six and primitive
positive gradings are listed in Table \ref{tab:posgrad}. Example
\ref{ex:posgrad} illustrates how gradings are found. In
\cite{Madsen-S:multi-moment} we show that there are examples of
nilpotent algebras \( \lk \) of dimension seven that can not arise as
the derived algebra of any \( (2,3) \)-trivial algebra \( \g \).  It
follows that such \( \lk \) can not admit a positive grading.

Positive gradings can be used to generate \( (2,3) \)-trivial
algebras:

\begin{corollary}
  \label{cor:extposgr}
  Each of the \( 50 \) Lie algebras listed in Table \ref{tab:posgrad}
  is the derived algebra of a completely solvable \( (2,3) \)-trivial
  Lie algebra.
\end{corollary}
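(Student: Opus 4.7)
The plan is to apply Theorem \ref{thm:2-3-trivial} by producing, for each nilpotent algebra $\lk$ in Table \ref{tab:posgrad}, a rank-one solvable extension $\g = \bR H \oplus \lk$ whose derivation is supplied by the grading. Given a positive grading $\lk = \lk_1\oplus\dots\oplus\lk_r$, I would define $D\in\operatorname{Der}(\lk)$ by $D|_{\lk_i}=i\cdot\Id$; this is a derivation because $[\lk_i,\lk_j]\subseteq\lk_{i+j}$. Then I would equip $\g=\bR H\oplus\lk$ with bracket $[H,X]=DX$ for $X\in\lk$ and the original bracket on $\lk$.

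Next I would check that $\g'=\lk$: since $D$ has only positive eigenvalues it is invertible, so $[H,\lk]=D(\lk)=\lk$ and hence $\g'\supseteq\lk$; the opposite inclusion holds because $\g/\lk$ is one-dimensional and therefore abelian. Complete solvability then follows by choosing a basis of $\lk$ that refines the grading and simultaneously brings each $\ad_Y$ into strictly upper-triangular form (possible by Engel's theorem applied to nilpotent $\lk$): in such a basis $\ad_H$ is diagonal with positive integer entries and each $\ad_Y$ is strictly upper triangular, so $\ad_{aH+Y}$ is upper triangular with real diagonal for every $a\in\bR$ and $Y\in\lk$.

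The decisive step is verifying $H^i(\lk)^\g=0$ for $i=1,2,3$. Since $\lk$ acts on its own cohomology through inner derivations, which always act trivially, the $\g$-action on $H^\bullet(\lk)$ factors through $\g/\lk\cong\bR H$. The element $H$ acts semisimply on the Chevalley--Eilenberg complex $\Lambda^\bullet\lk^*$ and commutes with the differential, so the weight decomposition descends to cohomology and $H^i(\lk)^H$ equals the cohomology of the weight-zero subcomplex. Because each weight of $H$ on $\lk^*$ is minus a weight of the grading, and the grading weights are all positive, every weight of $H$ on $\Lambda^i\lk^*$ for $i\geq1$ is strictly negative; the weight-zero piece vanishes and its cohomology with it. The conditions of Theorem \ref{thm:2-3-trivial} are thereby met, so $\g$ is $(2,3)$-trivial.

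The principal difficulty is not this step but the case-by-case production of the gradings recorded in Table \ref{tab:posgrad}, which is the content of the preceding proposition. Once a positive grading is in hand, the construction of $\g$ and the vanishing of the invariants are uniform, resting only on positivity of the weights, and require no further computation with individual structure constants.
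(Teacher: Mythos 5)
Your proposal is correct and is essentially the paper's own proof: extend $\lk$ by the grading derivation $D|_{\lk_i}=i\cdot\Id$, note that all weights of $H$ on $\Lambda^s\lk^*$ ($s\geq1$) are non-zero so $(\Lambda^s\lk^*)^{\g}=0$ and hence $H^s(\lk)^{\g}=0$, then invoke Theorem~\ref{thm:2-3-trivial}, with complete solvability from the real (integer) eigenvalues of every $\ad_X$. Your extra details are fine — though note that the appeal to Engel's theorem is unnecessary, since any basis ordered by grading degree already makes each $\ad_Y$, $Y\in\lk$, strictly triangular.
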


\begin{proof}
  Let \( \g=\Span{A}+\lk \), where \( \lk \) is one of the algebras of
  Table \ref{tab:posgrad} and \( \ad_A \) acts as multiplication by \(
  i \) on \( \lk_i \).  Then \( \g \) is a solvable algebra.  Moreover
  \( (\Lambda^s\lk)^{\g}=\{0\} \) for \( s\geqslant1 \), so that \(
  H^1(\lk)^{\g}=\{0\}=H^2(\lk)^{\g}=H^3(\lk)^{\g} \).  Thus, by
  Theorem~\ref{thm:2-3-trivial}, \( \g \) is \( (2,3) \)-trivial.
  Since \( \ad_X \) has real eigenvalues for each \( X\in\g \) the Lie
  algebra is completely solvable.
\end{proof}

\begin{example}
  \label{ex:posgrad}
  A Lie algebra \( \lk \) may be specified in terms of the action of
  \( d\colon \lk^*\to \Lambda^2\lk^* \) on a basis for \( \lk^* \).
  By \( (0^2,12,13,14{+}23,24{+}15) \) we thus denote the nilpotent
  Lie algebra \( \lk \) which has a basis \( e^1,\dots,e^6 \) for \(
  \lk^* \) satisfying \( de^1=0=de^2 \), \( de^3=e^1e^2 \), \dots, \(
  de^6 = e^2e^4 + e^1e^5 \).  Assigning weights can now be formulated
  schematically as follows: \( e^1\to a \), \( e^2\to b \), \( e^3\to
  a+b \), \( e^4\to2a+b \), \( e^5\to 3a+b=a+2b \), \(
  e^6\to2(a+b)=2(a+b) \).  In particular we see that \( 2a=b \), so
  that a grading may be defined by \( \lk=\lk_1\oplus\dots\oplus\lk_6
  \), where \( \lk_i=\Span{e_i} \).  Choosing \( a=1 \), this weight
  decomposition is denoted by \( 123456 \).  Following the proof of
  Corollary \ref{cor:extposgr}, we may now define a \( (2,3)
  \)-trivial extension of \( \lk \): \(
  (0,12,2.13,3.14{+}23,4.15{+}24,5.16{+}25{+}34,6.17{+}24{+}26) \).
\end{example}

\begin{table}
  \centering
  \begin{tabular}[t]{L@{\hspace{-1em}}R}
    \tablehead{1}{l}{b}{Structure} & \tablehead{1}{r}{b}{Grading} \\   
    \hline
    (0)                    & 1       \\
    \hline
    (0^2)                  & 1^2     \\
    \hline
    (0^3)                  & 1^3     \\
    (0^2,12)               & 1^22    \\ 
    \hline
    (0^4)                  & 1^4     \\
    (0^3,12)               & 1^32    \\
    (0^2,12,13)            & 1^223   \\ 
    \hline
    (0^5)                  & 1^5     \\
    (0^4,12),(0^4,12{+}34) & 1^42    \\
    (0^3,12,13)            & 1^32^2  \\
    (0^3,12,14)            & 1^323   \\
    (0^3,12,13{+}24)       & 1^22^23 \\
    (0^2,12,13,23)         & 1^223^2 \\    
    (0^2,12,13,14)         & 1^2234  \\
    (0^2,12,13,14{+}23)    & 12345   \\
    \hline
    (0^6)                  & 1^6     \\
    (0^5,12),(0^5,12+34)   & 1^52    \\[3ex]
    \hline
  \end{tabular}
  \quad
  \begin{tabular}[t]{L@{\hspace{-1em}}R}
    \tablehead{1}{l}{b}{Structure} & \tablehead{1}{r}{b}{Grading} \\   
    \hline
    (0^4,12,13),(0^4,13{+}42,14{+}23),            &           \\
    \quad (0^4,12,34),(0^4,12,14{+}23)            & 1^42^2    \\
    (0^4,12,15)                                   & 1^423     \\
    (0^3,12,13,23)                                & 1^32^3    \\
    (0^4,12,14{+}25),(0^4,12,15{+}34),            &           \\
    \quad (0^3,12,13,14),(0^3,12,23,14{\pm}35),   &           \\
    \quad (0^3,12,13,24),(0^3,12,13,14{+}35)      & 1^32^23   \\
    (0^3,12,14,24)                                & 1^323^2   \\    
    (0^3,12,14,15)                                & 1^3234    \\
    (0^3,12,13{+}14,24),(0^3,12,13{+}42,14{+}23)  &           \\
    \quad (0^3,12,13,14{+}23),(0^3,12,14,13{+}42) & 1^22^23^2 \\
    (0^3,12,14{-}23,15{+}34)                      & 1^22^234  \\
    (0^2,12,13,23,14{\pm}25),(0^2,12,13,23,14)    & 1^223^24  \\
    (0^2,12,13,14,15),(0^2,12,13,14,34{+}52)      & 1^22345   \\
    (0^3,12,14,15{+}23)                           & 1^23234   \\
    (0^3,12,14,15{+}24)                           & 121345    \\
    (0^3,12,14,15{+}23{+}24)                      & 123^245   \\
    (0^2,12,13,14+23,24+15)                       & 123456    \\
    (0^2,12,13,14{+}23,34{+}52)                   & 123457    \\
    (0^2,12,13,14,23{+}15)                        & 134567    \\ 
    \hline
  \end{tabular}
  \caption{Positive gradings of nilpotent Lie algebras of dimension \(
    \leqslant 6 \). Algebras are ordered according to their dimension
    and a primitive positive grading.} 
  \label{tab:posgrad}
\end{table}

\section{Families of (2,3)-trivial algebras}
\label{sec:clf23}

While the method of positive gradings provides an effective tool for
constructing examples of \((2,3)\)-trivial algebras, it is inadequate
if one aims for a general understanding of the \((2,3)\)-trivial
class. Therefore we now turn to give a complete list of such algebras
in dimensions up to and including five.

In dimension one, the only Lie algebra is Abelian and is automatically
\( (2,3) \)-trivial.  In dimension two a Lie algebra is either Abelian
or isomorphic to the \( (2,3) \)-trivial algebra \( (0,21) \).  These
first two examples are uninteresting from the point of view of
multi-moment maps since they have \( \Pg=\{0\} \).  In next dimensions
we have:

\begin{proposition}
  \label{prop:solb345}
  The \( (2,3) \)-trivial Lie algebras in dimensions three, four and
  five are listed in the Tables \ref{tab:34sp} and \ref{tab:5sp}.
\end{proposition}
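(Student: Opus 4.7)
The plan is to invoke Theorem~\ref{thm:2-3-trivial} to reduce the classification to a problem about derivations. Any \( (2,3) \)-trivial Lie algebra \( \g \) of dimension \( n\in\{3,4,5\} \) is of the form \( \g=\bR A \oplus \lk \) (as a vector space), where \( \lk=\g' \) is a Lie algebra of dimension \( n-1\in\{2,3,4\} \) and the bracket on \( \g \) is determined by \( \lk \) together with the derivation \( D=\ad_A|_{\lk}\in\operatorname{Der}(\lk) \). Conversely, such an extension is \( (2,3) \)-trivial precisely when (i) \( D(\lk)+[\lk,\lk]=\lk \), so that the derived algebra is really \( \lk \), and (ii) the operator induced by \( D \) on each of \( H^1(\lk) \), \( H^2(\lk) \) and \( H^3(\lk) \) has no non-zero fixed vectors. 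So the problem splits naturally according to the isomorphism class of \( \lk \).

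Concretely, I would proceed \( \lk \)-by-\( \lk \) through the standard lists of real Lie algebras of dimension \( \leqslant 4 \). For each such \( \lk \), I would first compute the Chevalley--Eilenberg complex \( (\Lambda^{\bullet}\lk^*,d) \) and read off \( H^1(\lk) \), \( H^2(\lk) \), \( H^3(\lk) \) as modules for \( \operatorname{Der}(\lk) \). Next I would write down a general element \( D\in\operatorname{Der}(\lk) \) in matrix form and impose the two conditions above; because \( \dim\lk\leqslant 4 \), all the resulting linear/eigenvalue constraints on the entries of \( D \) can be handled by hand. The admissible derivations form a (typically positive-dimensional) subset \( S(\lk)\subset\operatorname{Der}(\lk) \). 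I would then quotient \( S(\lk) \) by the natural equivalence: conjugation by \( \operatorname{Aut}(\lk) \) (change of basis in \( \lk \)), addition of inner derivations (change \( A \mapsto A+X \) with \( X\in\lk \)), and overall rescaling (change \( A \mapsto \lambda A \), \( \lambda\in\bR^{\times} \)). Choosing a normal form for the orbit representatives yields the bracket relations displayed in Tables~\ref{tab:34sp} and \ref{tab:5sp}; verifying that each listed algebra is indeed \( (2,3) \)-trivial is a final mechanical check via Theorem~\ref{thm:2-3-trivial}.

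The main obstacle, at least when \( \dim\lk=4 \), is the bookkeeping in the last step: \( S(\lk) \) often contains several continuous families of admissible derivations, and for nilpotent \( \lk \) the cohomology module \( H^2(\lk) \) is sufficiently large that the fixed-point-free condition imposes non-trivial relations among the eigenvalues of \( D \). Producing an exhaustive list of orbit representatives for \( \operatorname{Aut}(\lk)\times\bR^{\times} \) acting on \( S(\lk) \), without duplication across isomorphism classes of \( \g \), is the delicate combinatorial point. In dimensions three and four the lists are short enough that the calculation is essentially immediate, and they serve as useful warm-ups to calibrate the argument before tackling the five-dimensional case.
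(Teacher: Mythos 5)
Your strategy is the paper's: reduce via Theorem~\ref{thm:2-3-trivial} to classifying, for each candidate derived algebra \( \lk \), the derivations \( D=\ad_A \) that act invertibly on \( H^1(\lk) \), \( H^2(\lk) \) and \( H^3(\lk) \), and then put \( D \) into normal form. The paper phrases the cohomological condition as non-vanishing of the determinants \( a_1,a_2,a_3 \) of the induced endomorphisms, which is your fixed-point-free condition; your condition (i) is then automatic, since invertibility on \( H^1(\lk)\cong(\lk/[\lk,\lk])^* \) already gives \( D(\lk)+[\lk,\lk]=\lk \).

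One observation is missing and should be added: \( \lk=\g' \) is the derived algebra of a solvable Lie algebra and is therefore nilpotent. This does two things. It justifies the ``precisely when'' in your reduction --- for nilpotent \( \lk \) the extension \( \bR A+\lk \) is automatically solvable, so the converse direction of Theorem~\ref{thm:2-3-trivial} applies; for non-solvable \( \lk \) the theorem does not tell you that (i) and (ii) imply \( (2,3) \)-triviality, so as stated your equivalence has a hole there. It also cuts ``the standard lists of real Lie algebras of dimension \( \leqslant4 \)'' down to the six nilpotent ones the paper actually treats: \( \bR^2 \); \( \bR^3 \) and \( \h_3=(0^2,21) \); \( \bR^4 \), \( (0^3,21) \) and \( (0^2,21,31) \). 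Finally, the ``delicate combinatorial point'' you single out --- producing orbit representatives without duplication --- is explicitly waived in the paper: the proof states that inequivalence of the listed algebras is not discussed and would only impose further restrictions on the parameters, so for the proposition as stated you need an exhaustive list, not an irredundant one.
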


We shall now give a proof of Proposition \ref{prop:solb345}. Note that
we do not discuss inequivalence of the algebras; imposing
inequivalence would put further restrictions on the parameters, see
for instance \cite[Theorem 1.1, 1.5]{Andrada-BDO:four}.

\begin{table}
  \centering
  \begin{tabular}{LCR}  
    \hline
    \lr_3 & (0,21{+}31,31) & \\
    \lr_{3,\lambda} & (0,21,\lambda.31) & \lambda \neq {-}1,0 \\
    \lr'_{3,\lambda} & (0,\lambda.21 {+}31,{-}21{+}\lambda.31) &
    \lambda\neq0 \\ 
    \lr_4 & (0,21{+}31,31{+}41,41) & \\
    \lr_{4,\lambda} & (0,21,\lambda.31{+}41,\lambda.41) & \lambda \ne
    {-}1,{-}\tfrac12,0 \\ 
    \lr_{4,\lambda(2)} & (0,21,\lambda_1.31,\lambda_2.41) &
    \lambda_i,\lambda_1{+}\lambda_2\neq{-}1,0 \\ 
    \lr'_{4,\lambda(2)} &
    (0,\lambda_1.21,\lambda_2.31{+}41,-31{+}\lambda_2.41) &
    \lambda_1\neq0,\lambda_2 \ne {-}\tfrac{\lambda_1}{2},0 \\ 
    \ld_{4,\lambda} & (0,21,\lambda.31,(1{+}\lambda).41{+}32) &
    \lambda\ne {-}2,{-}1,{-}\tfrac12,0 \\ 
    \ld'_{4,\lambda} &
    (0,\lambda.21{+}31,{-}21{+}\lambda.31,2\lambda.41{+}32) &
    \lambda\neq 0 \\ 
    \h_4 & (0,21{+}31,31,2.41{+}32) & \\
    \hline
  \end{tabular}
  \caption{The three- and four-dimensional \( (2,3) \)-trivial Lie
    algebras.  Our labelling of \( \ld_{\lambda} \)
    differs from \cite[Theorem  1.5]{Andrada-BDO:four}.} 
  \label{tab:34sp}
\end{table} 

\begin{table}
  \centering
  \begin{tabular}{LLL}  
    \hline
    \lr_5 & (0,21{+}31,31{+}41,41{+}51,51) & \\
    \lr_{5(1),\lambda} &
    (0,21,\lambda.31{+}41,\lambda.41{+}51,\lambda.51) &
    \lambda\ne{-}1,{-}\tfrac12,0 \\ 
    \lr_{5(2),\lambda} & (0,21{+}31,31,\lambda.41{+}51,\lambda.51) &
    \lambda\ne-2,-1,-\tfrac12,0 \\ 
    \lr_{5,\lambda(2)} &
    (0,21,\lambda_1.31,\lambda_2.41{+}51,\lambda_2.51) &
    \lambda_i\ne-1,0 ;   \lambda_1{+}\lambda_2\ne0,{-}1  ; \\ 
    &  & 1{+}2\lambda_2,\lambda_1{+}2\lambda_2\ne0 \\
    \lr_{5,\lambda(3)} & 
    (0,21,\lambda_1.31,\lambda_2.41,\lambda_3.51) & 
    \lambda_i\neq -1,0 ;   \lambda_1{+}\lambda_2{+}\lambda_3\ne0; \\
    && \lambda_i{+}\lambda_j\neq{-}1,0\ (i\ne j) \\
    \lr'_{5,\lambda(2)} &
    (0,\lambda_1.21{+}31,\lambda_1.31,\lambda_2.41{+}51,-41{+}\lambda_2.51)
    & \lambda_i,\lambda_1{+}2\lambda_2\ne0 \\ 
    \lr'_{5,\lambda(3)} &
    (0,\lambda_1.21,\lambda_2.31,\lambda_3.41{+}51,-41{+}\lambda_3.51)
    & \lambda_i{\ne} 0 ; \lambda_1{\ne} {-}\lambda_2;
    \lambda_1,\lambda_2{\ne} {-} 2\lambda_3\\
    \lr''_{5,\lambda} &
    (0,\lambda.21{+}31{+}41,{-}21{+}\lambda.31{+}51,
    \lambda.41{+}51,{-}41{+}\lambda.51) 
    & \lambda\ne0 \\ 
    \lr''_{5,\lambda(3)} &
    (0,\lambda_1.21{+}31,{-}21{+}\lambda_1.31,
    \lambda_2.41{+}\lambda_3.51, {-}\lambda_3.41{+}\lambda_2.51)
    & \lambda_i\ne0 \\ 
    \ld_{5(1)} & (0,21,21{+}31,31{+}41,2.51{+}32) & \\ 
    \ld^{\pm}_{5(2)} & (0,21,21{+}31,2.41,2.51\pm41{+}32) & \\
    \ld_{5(1),\lambda} &
    (0,21,\lambda.31,(1{+}\lambda).41,(1{+}\lambda).51{+}32{+}41) &
    \lambda\ne{-}2,{-}\tfrac32,{-}1,{-}\tfrac23,{-}\tfrac12,0 \\ 
    \ld_{5(2),\lambda} & (0,21,21{+}31,\lambda.41,2.51{+}32) &
    \lambda\ne-3,-1,0 \\ 
    \ld_{5,\lambda(2)} &
    (0,21,\lambda_1.31,\lambda_2.41,(1{+}\lambda_1).51{+}32) &
    \lambda_1\ne{-}2,{-}\tfrac12,{-}1,0 ;  \lambda_2\ne0,{-}1 ; \\ 
    &  & \lambda_1{+}\lambda_2\ne{-}2,0 ;  \lambda_2{+}2\lambda_1\ne{-}1 \\
    \ld_{5(3),\lambda} &
    (0,\lambda.21,31,31{+}41,(1{+}\lambda).51{+}32) &
    \lambda\ne{-}3,{-}2,{-}1,{-}\tfrac12,0 \\ 
    \ld'^{\pm}_{5,\lambda} &
    (0,\lambda.21{+}31,-21{+}\lambda.31,2\lambda.41,2\lambda.51\pm41{+}32)
    & \lambda\ne0 \\ 
    \ld'_{5,\lambda(2)} &
    (0,\lambda_1.21{+}31,-21{+}\lambda_1.31,\lambda_2.41,2\lambda_1.51)
    & \lambda_1,\lambda_2\ne0 \\ 
    \lp_5 & (0,21,21{+}31,2.41{+}32,3.51{+}42) & \\ 
    \lp_{5,\lambda} &
    (0,21,\lambda.31,(1{+}\lambda).41{+}32,(2{+}\lambda).51{+}42) &
    \lambda\ne {-}3,{-}2,{-}1,{-}\tfrac12,0 \\ 
    \hline
  \end{tabular}
  \caption{The five-dimensional \( (2,3) \)-trivial Lie
    algebras.}
  \label{tab:5sp}
\end{table}

The starting point for our analysis is Theorem~\ref{thm:2-3-trivial}
which gives \( \g=\bR A+\lk \), where \( \lk=\g' \) is nilpotent. The
element \( A \) acts on \( H^i(\lk) \) as endomorphism with
determinant~\( a_i \).  Now \( (2,3) \)-triviality of \( \g \) may be
rephrased as the non-vanishing of \( a_1 \), \( a_2 \) and \( a_3 \).

\paragraph{Dimension three}

Let \( \g \) be a \( (2,3) \)-trivial algebra of dimension three.
Then \( \lk \) is nilpotent and two-dimensional, so \( \lk\cong\bR^2
\).  The element \( A \) acts on \( \bR^2 \) invertibly and the
induced action on \( H^2(\bR^2) \cong \Lambda^2\bR^2 \cong \bR \) is
also invertible.  So either \( A \) is diagonalisable over \( \bC \)
with non-zero eigenvalues whose sum is non-zero, giving cases \(
\lr_{3,\lambda\neq-1,0} \) and \( \lr'_{3,\lambda\neq0} \), or \( A \)
acts with Jordan normal form \( \begin{smallpmatrix}
  \lambda&1\\0&\lambda \end{smallpmatrix} \), \( \lambda\ne0 \),
giving case \( \lr_3 \).

\paragraph{Dimension four}

For \( \g \) of dimension four we have \( \lk \cong \bR^3 \) or the
Heisenberg algebra \( \h_3 = (0^2,21) \).  The former gives the
algebras from the \( \lr \)- and \( \lr' \)-series.  Derivations of \(
\bR^3 \) are just linear endomorphisms; therefore the relevant list of
extensions of \( \bR^3 \) may be obtained from considerations of
invertible \( 3\times 3 \) matrices in normal form: \( A_1 =
\begin{smallpmatrix}
  1&0&0\\0&\lambda_1&0\\0&0&\lambda_2
\end{smallpmatrix}
\), \( A_2 =
\begin{smallpmatrix}
  1&0&0\\0&\lambda&1\\0&0&\lambda
\end{smallpmatrix}
\), \( A_3 =
\begin{smallpmatrix}
  1&1&0\\0&1&1\\0&0&1
\end{smallpmatrix}
\) and \( A_4 =
\begin{smallpmatrix}
  \lambda_1&0&0\\0&\lambda_2&1\\0&-1&\lambda_2
\end{smallpmatrix}
\).  The element \( A_1 \) gives the family \( \lr_{4,\lambda(2)} \),
and the induced action on \( H^1(\bR^3)\cong\bR^3 \) is, up to sign,
multiplication by the transpose of \( A_1 \).  Using this observation
one easily finds the induced actions on \( H^2(\bR^3)\cong\bR^3 \) and
\( H^3(\bR^3)\cong\bR \).  We deduce that \( (2,3) \)-triviality holds
if and only if the determinants \( a_1 = \lambda_1\lambda_2 \), \( a_2
= (1 + \lambda_1)(1 + \lambda_2)(\lambda_1 + \lambda_2) \) and \( a_3
= 1 + \lambda_1 + \lambda_2 \) do not vanish.  The matrix \( A_2 \)
gives us the algebra \( \lr_{4,\lambda} \).  In this case \( a_1 =
\lambda^2 \), \( a_2 = 2\lambda(1 + \lambda)^2 \), \( a_3 = 1 +
2\lambda \), giving the restrictions on parameters in
Table~\ref{tab:34sp}.  The algebra \( \lr_4 \) corresponds to \( A_3
\).  Finally, \( A_4 \) occurs when the action has \( 2 \) complex
eigenvalues.  The corresponding family is \( \lr'_{4,\lambda(2)} \),
where \( \lambda_1,\lambda_2 \) are restricted by \( a_i\ne0 \) for \(
a_1 = \lambda_1(1 + \lambda_2^2) \), \( a_2 = 2\lambda_2(1 +
(\lambda_1 + \lambda_2)^2) \) and \( a_3 = \lambda_1 + 2\lambda_2 \).

The Heisenberg algebra \( \h_3 \) has \( H^1(\h_3) \cong
\Span{e^1,e^2} \), \( H^2(\h_3) \cong \Span{e^{13},e^{23}} \), \(
H^3(\h_3) \cong \Span{e^{123}} \).  The action of \( A \), being a
derivation, is represented by a matrix of the form
\( \begin{smallpmatrix}
  B&0\\\underline{b}&\trace{B} \end{smallpmatrix} \) with \( B \) a
real \( 2\times 2 \)-matrix and \( \underline{b} = (b_1,b_2)\in\bR^2
\).  To see this, write \( \ad_A(e_i) = \sum_kb^k_ie_k \) and consider
the relation \( \ad_A(e_3) = \ad_A[e_1,e_2] = [\ad_A(e_1),e_2] +
[e_1,\ad_A(e_2)] \).  After the transformation \( A\to A-b_2 e_1+b_1
e_2 \) we may assume \( \underline{b} = 0 \), so that the algebras are
distinguished by the normal form of \( B \).  The family \(
\ld_{4,\lambda} \) arises when \( B = \diag(1,\lambda) \).
Restrictions on \( \lambda \) now follow from the determinants \( a_1
= \lambda \), \( a_2 = (2+\lambda)(1+2\lambda) \) and \( a_3 =
2(1+\lambda) \) being non-zero.  If \( B = \begin{smallpmatrix}
  1&1\\0&1 \end{smallpmatrix} \) one has the algebra \( \h_4 \).
Finally the action may have complex eigenvalues so that \( B
= \begin{smallpmatrix} \lambda&1\\-1&\lambda \end{smallpmatrix} \),
corresponding to the family \( \ld'_{4,\lambda} \).  One finds \( a_1
= 1+\lambda^2 \), \( a_2 = 1+9\lambda^2 \) and \( a_3 = 4\lambda \),
so we must have \( \lambda\neq0 \).

\paragraph{Dimension five}

A five-dimensional \( (2,3) \)-trivial Lie algebra has \(
\lk\cong\bR^4 \), \( (0^3,21) \) or \( (0^2,21,31) \).  In the Abelian
case \( H^1(\bR^4)\cong\bR^4 \), \( H^2(\bR^4)\cong\bR^6 \), \(
H^3(\bR^4)\cong\bR^4 \).  The solvable extensions are found by taking
invertible matrices in the normal forms \( A_1,\dots,A_9 \):
\begin{gather*}
  \begin{smallpmatrix}
    1&0&0&0\\
    0&\lambda_1&0&0\\
    0&0&\lambda_2&0\\
    0&0&0&\lambda_3
  \end{smallpmatrix}
  ,\quad
  \begin{smallpmatrix}
    1&0&0&0\\
    0&\lambda_1&0&0\\
    0&0&\lambda_2&1\\
    0&0&0&\lambda_2
  \end{smallpmatrix}
  ,\quad
  \begin{smallpmatrix}
    1&1&0&0\\
    0&1&0&0\\
    0&0&\lambda&1\\
    0&0&0&\lambda
  \end{smallpmatrix}
  ,\quad
  \begin{smallpmatrix}
    1&0&0&0\\
    0&\lambda&1&0\\
    0&0&\lambda&1\\
    0&0&0&\lambda
  \end{smallpmatrix}
  ,\quad
  \begin{smallpmatrix}
    1&1&0&0\\
    0&1&1&0\\
    0&0&1&1\\
    0&0&0&1
  \end{smallpmatrix},\\
  \begin{smallpmatrix}
    \lambda_1&0&0&0\\
    0&\lambda_2&0&0\\
    0&0&\lambda_3&1\\
    0&0&-1&\lambda_3
  \end{smallpmatrix}
  ,\quad
  \begin{smallpmatrix}
    \lambda_1&1&0&0\\
    0&\lambda_1&0&0\\
    0&0&\lambda_2&1\\
    0&0&-1&\lambda_2
  \end{smallpmatrix}
  ,\quad
  \begin{smallpmatrix}
    \lambda_1&1&0&0\\
    -1&\lambda_1 &0&0\\
    0&0&\lambda_2&\lambda_3\\
    0&0&-\lambda_3&\lambda_2
  \end{smallpmatrix}
  ,\quad
  \begin{smallpmatrix}
    \lambda&1&1&0\\
    -1&\lambda&0&1\\
    0&0&\lambda&1\\
    0&0&-1&\lambda
  \end{smallpmatrix}.
\end{gather*}
The matrix \( A_1 \) gives the family \( \lr_{5,\lambda(3)} \) with
restrictions on \( \lambda_i \) following from non-vanishing of \( a_1
= \lambda_1\lambda_2\lambda_3 \), \( a_2 = \prod_i
(1+\lambda_i)\prod_{i<j}(\lambda_i + \lambda_j) \), \( a_3 =
(\lambda_1+\lambda_2+\lambda_3)\prod_{i<j}(1 + \lambda_j + \lambda_k)
\).  The form \( A_2 \) corresponds to the family \(
\lr_{5,\lambda(2)} \) and the determinants \( a_1 =
\lambda_1\lambda_2^2 \), \( a_2 = 2\lambda_2 (1+\lambda_1)
(1+\lambda_2)^2 (\lambda_1+\lambda_2)^2 \), and \( a_3 =
(1+\lambda_1+\lambda_2)^2 (1+2\lambda_2) (\lambda_1+2\lambda_2) \)
should be non-zero.  From \( A_3 \) we obtain the family \(
\lr_{5(2),\lambda} \) with parameter value constrained by \( a_1 =
\lambda^2 \), \( a_2 = 4\lambda(1+\lambda)^4 \), \( a_3 =
(1+2\lambda)^2(2+\lambda)^2 \) being non-zero.  The matrix \( A_4 \)
corresponds to \( \lr_{5(1),\lambda} \) with \( \lambda \) constrained
by \( a_i\ne0 \) for \( a_1 = \lambda^3 \), \( a_2 =
8\lambda^3(1+\lambda)^3 \), \( a_3 = 3\lambda(1+2\lambda)^3 \).  The
algebra \( \lr_5 \) is from \( A_5 \).  Members of the \( \lr' \)- and
\( \lr'' \)-series occur when \( \ad_A \) has \( 2 \) or \( 4 \)
complex eigenvalues.  The algebra \( \lr'_{5,\lambda(3)} \)
corresponds to \( A_6 \); the conditions are that \( a_1 =
\lambda_1\lambda_2(1 + \lambda_3^2) \), \( a_2 = 2\lambda_3(\lambda_1
+ \lambda_2)(1 + (\lambda_1 + \lambda_3)^2)(1 + (\lambda_2 +
\lambda_3)^2) \) and \( a_3 = (\lambda_1 + 2\lambda_3)(\lambda_2 +
2\lambda_3)(1 + (\lambda_1 + \lambda_2 + \lambda_3)^2) \) are
non-zero.  The form \( A_7 \) gives the family \( \lr'_{5,\lambda(2)}
\) with \( \lambda_1 \), \( \lambda_2 \) constrained by \( a_1 =
\lambda_1^2(1+\lambda_2^2) \), \( a_2 =
4\lambda_1\lambda_2(1+(\lambda_1+\lambda_2)^2)^2 \), \( a_3 =
(\lambda_1+2\lambda_2)^2(1+(2\lambda_1+\lambda_2)^2) \) being
non-zero.  The matrix \( A_8 \) has \( \lambda_3\ne0 \) and
corresponds to the family \( \lr''_{5,\lambda(3)} \); restrictions on
parameter values follow from non-zero values for \( a_1 =
(1+\lambda_1^2) (\lambda_2^2+\lambda_3^2) \), \( a_2 =
4\lambda_1\lambda_2 ((\lambda_1+\lambda_2)^2 + (1+\lambda_3)^2)
((\lambda_1+\lambda_2)^2 + (1-\lambda_3)^2) \) and \( a_3 =
(\lambda_3^2 + (2\lambda_1+\lambda_2)^2) (1 +
(\lambda_1+2\lambda_2)^2) \).  Finally \( A_9 \) gives the algebra \(
\lr''_{5,\lambda} \).  The determinants \( a_1 = (1+\lambda^2)^2 \),
\( a_2 = 64\lambda^4(1+\lambda^2) \) and \( a_3 = (1+9\lambda^2)^2 \)
must be non-zero.

To analyse the cases \( (0^3,21) \) and \( (0^2,21,31) \) we follow
and modify arguments given in \cite{Mubarakzjanov:lie5}.  First
consider \( \lk \cong (0^3,21) \) which has \( H^1(\lk) \cong
\Span{e^1,e^2,e^3} \), \( H^2(\lk) \cong
\Span{e^{13},e^{14},e^{23},e^{24}} \) and \( H^3 \cong
\Span{e^{124},e^{134},e^{234}} \).  Write \( A(e_i) = a^k_ie_k \) for
\( i = 1,2,3,4 \).  From the relations \( \ad_A(e_4) =
[\ad_A(e_1),e_2] + [e_1,\ad_A(e_2)] \), \( 0 = \ad_A[e_i,e_3] =
[\ad_A(e_i),e_3] + [e_i,\ad_A(e_3)] \), \( i=1,2 \), we deduce that \(
a^4_4 = a^1_1+a^2_2 \) and \( a^1_4 = 0 = a^2_4 = a^3_4 = a^2_3 =
a^1_3 \).  After the transformation \( A\to A-a^4_2e_1+a^4_1e_2 \) we
may assume \( a^4_1 = a^4_2 = 0 \).  The restriction \( B = (b^k_i) \)
of \( \ad_A \) to the subspace \( \Span{e_1,e_2,e_3} \) has \( b^1_3 =
0 = b^2_3 \).  This may be transformed to Jordan form via \( e_1\to
ae_1+be_2+ce_3 \), \( e_2\to pe_1+qe_2+re_3 \), \( e_3\to se_3 \) with
\( aq-bp\ne0 \) and \( s\ne0 \).  Excluding degenerate matrices, we
may therefore take \( B = B_i \) to be one of: \(
\begin{smallpmatrix}
  1&0&0\\0&\lambda_1&0\\0&0&\lambda_2
\end{smallpmatrix}
\), \(
\begin{smallpmatrix}
  1&0&0\\1&1&0\\0&1&1
\end{smallpmatrix}
\), \(
\begin{smallpmatrix}
  1&0&0\\1&1&0\\0&0&\lambda
\end{smallpmatrix}
\), \(
\begin{smallpmatrix}
  \lambda_1&1&0&\\-1&\lambda_1&0\\0&0&\lambda_2
\end{smallpmatrix}
\), \(
\begin{smallpmatrix}
  \lambda&0&0&\\0&1&0\\0&1&1
\end{smallpmatrix}
\).  Consider first the case \( B_1 = \diag(1,\lambda_1,\lambda_2) \).
If \( \lambda_2\ne1+\lambda_1 \) we may assume, making a change \(
e_3\to e_3+\alpha e_4 \) if necessary, that \( a^4_3 = 0 \).  This
gives us the family \( \ld_{5,\lambda(2)} \).  The determinants \( a_1
= \lambda_1\lambda_2 \), \( a_2 =
(1+\lambda_2)(2+\lambda_1)(\lambda_1+\lambda_2)(1+2\lambda_1) \), and
\( a_3 = 2(1+\lambda_1)(2+\lambda_2+\lambda_1)(1+2\lambda_1+\lambda_2)
\) must be non-zero.  Turning next to the case \( \lambda_2 =
1+\lambda_1 \), let us assume \( a_3^4\ne0 \), otherwise we get a
member of the family \( \ld_{5,\lambda(2)} \).  After rescaling \(
e_i\to \abs{a^4_3}^{1/2}e_i \), for \( i = 1,2 \), \( e_4\to
\abs{a^4_3}e_4 \), we obtain the families \( \ld^{\pm}_{5(1),\lambda}
\) given by \( (0,21,\lambda.31,(1 + \lambda).41, (1 +
\lambda).51+32\pm41) \).  Scaling \( (e_1,e_4,e_5) \) by factors \(
(\lambda,\lambda^{-1},-1) \) and interchanging \( e_2 \) and \( e_3 \)
gives \( \ld^+_{5(1),\lambda} \cong \ld^-_{5(1),1/\lambda} \), so
there is only one family \( \ld_{5(1),\lambda}: = \ld^+_{5(1),\lambda}
\).  Restrictions on \( \lambda \) follow from non-vanishing of \( a_1
= \lambda(1+\lambda) \), \( a_2 = (2+\lambda)^2(1+2\lambda)^2 \) and
\( a_3 = 2(1+\lambda)(3+2\lambda)(2+3\lambda) \).  For the matrix \(
B_2 \) we may assume that \( a^4_3 = 0 \), so that we have the \(
(2,3) \)-trivial algebra \( \ld_{5} \).  The algebra \(
\ld_{5(2),\lambda} \) corresponds to the matrix \( B_3 \) with \(
a^4_3 = 0 \).  The following determinants \( a_1 = \lambda \), \( a_2
= 9(1+\lambda)^2 \), and \( a_3 = 4(3+\lambda)^2 \) must be non-zero.
For \( B_3 \) with \( a_3^4\ne0 \) we obtain the algebra \(
\ld_{5(2)}^{\pm} \); this requires a rescaling \( e_i\to
\abs{a^4_3}^{1/2}e_i \), for \( i = 1,2 \), \( e_4\to \abs{a^4_3}e_4
\).  From \( B_4 \) we obtain \( \ld'_{5,\lambda(2)} \) when \( a_3^4
= 0 \).  The requirement that \( a_1 = \lambda_2(1+\lambda_1^2) \), \(
a_2 = (1+9\lambda_1^2)(1+(\lambda_1+\lambda_2)^2) \), \( a_3 =
4\lambda_1(1+(3\lambda_1+\lambda_2)^2) \) are non-zero enforces
restrictions on the \( \lambda_i \)'s.  When \( a_3^4\ne0 \) we find,
after appropriate rescaling, that \( B_4 \) corresponds to the family
\( \ld'^{\pm}_{5,\lambda} \).  The determinants \( a_1 =
2\lambda(1+\lambda^2) \), \( a_2 = (1+9\lambda^2)^2 \) and \( a_3 =
4\lambda(1+25\lambda^2) \) must be non-zero.  For the matrix \( B_5 \)
we must have \( a_3^4 = 0 \) and so we get the family \(
\ld_{5(3),\lambda} \).  The allowed values for \( \lambda \) are
deduced from the determinants \( a_1 = \lambda \), \( a_2 =
2(1+\lambda)(1+2\lambda)(2+\lambda) \), \( a_3 =
4(1+\lambda)^2(3+\lambda) \) being non-zero.
    
Finally, for \( \lk\cong(0^2,21,31) \) we have \( H^1(\lk) =
\Span{e^1,e^2} \), \( H^2(\lk)\cong\Span{e^{14},e^{23}} \), \(
H^3(\lk)\cong\Span{e^{134},e^{234}} \).  As above, write \( A(e_i) =
a^k_ie_k \).  Considering the relations \( 0 = \ad_A[e_2,e_3] =
[\ad_A(e_2),e_3] + [e_2,\ad_A(e_3)] \), \( \ad_A(e_3) =
[\ad_A(e_1),e_2] + [e_1,\ad_A(e_2)] \) and \( \ad_A(e_4) =
[\ad_A(e_1),e_3] + [e_1,\ad_A(e_3)] \), one finds \( a^1_2 = a^1_3 =
a^1_4 = a^2_3 = a^2_4 = a^3_4 = 0 \), \( a^3_2 = a^4_3 \) and \( a^4_4
= a^1_1 + a^3_3 \), \( a^3_3 = a^1_1 + a^2_2 \).  After the
transformation \( A\to A-a^3_2e_1+a^3_1e_2+a^4_1e_3 \) we may assume
that \( \ad_A \) takes the form \( \diag(p,q,p+q,2p+q) + A' \), where
\( A' \) only has non-zero entries \( {a'}^2_1 = a^2_1 \) and \(
{a'}^4_2 = a^4_2 \), below the diagonal.  We then obtain \(
\lp_{5,\lambda} \) and \( \lp_5 \) as follows.  As \( \lk = \g' \) one
has \( p \ne 0 \) and we may to rescale \( \ad_A \) by \( 1/p \).  If
\( q\ne p \) we may consider the transformation \( e_1\to e_1+ {a^2_1
  e_2}/(p-q) \).  After appropriate transformations \( e_1\to e_1+ae_4
\), \( e_2\to e_2+be_4 \), we obtain the algebra \( \lp_{5,\lambda} \)
with \( \lambda = q/p \).  For this family we have determinants \( a_1
= \lambda \), \( a_2 = (1+2\lambda)(3+\lambda) \) and \( a_3 =
6(1+\lambda)(2+\lambda) \), so that \( a_i\ne0 \) enforces \( \lambda
\) to be as specified in Table~\ref{tab:5sp}.  Consider now \( q = p
\) and note that we may assume \( a^2_1\ne0 \), since otherwise we end
with~\( \lp_{5,\lambda} \).  Making a change \( e_i\to a^2_1 e_i \)
for \( i = 2,3,4 \) and then transforming \( e_2\to e_2+c e_4 \), we
get the algebra \( \lp_5 \).

This concludes the proof of Proposition \ref{prop:solb345}.

\paragraph{Unimodular}

The lists of \((2,3)\)-trivial algebras in dimensions up to and
including five reveal that algebraic properties of this class are not
fully reflected in low-dimensional examples. A Lie algebra \( \g \) is
called \emph{unimodular} if the homomorphism \( \chi\colon \g\to\bR \)
given by \( \chi(x) = \trace(\ad(x)) \) has trivial image. By direct
inspection we observe that the \( (2,3) \)-trivial Lie algebras of
dimensions two, three and four are not unimodular. On the other hand
there are infinitely many five-dimensional algebras with this
property:

\begin{corollary}
  The unimodular \( (2,3) \)-trivial Lie algebras of dimension up to
  and including five are \( \bR \), \( \lr_{5(1),-1/3} \), \(
  \lr_{5,\lambda,-(1+\lambda)/2} \), \(
  \lr_{5,\lambda,\mu,-(1+\lambda+\mu)} \), \(
  \lr'_{5,\lambda,-\lambda} \), \( \lr''_{5,\lambda,-\lambda,\mu} \),
  \( \lr'_{5,\lambda,\mu,-(\lambda+\mu)/2} \), \( \ld_{5(2),-4} \), \(
  \ld_{5,\lambda,-2(1+\lambda)} \), \( \ld_{5(3),-3/2} \), \(
  \ld'_{5,\lambda,-4\lambda} \) and \( \lp_{5,-4/3} \), where
  parameters satisfy the conditions in Table~\ref{tab:5sp}.
\end{corollary}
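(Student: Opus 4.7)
The key reduction is that unimodularity of \( \g \) is equivalent to the single scalar condition \( \trace(\ad_A\vert_\lk) = 0 \) with notation as in Theorem~\ref{thm:2-3-trivial}. Indeed, for \( x\in\lk \), the ideal property gives
\begin{equation*}
  \ad_x = \begin{pmatrix} 0 & 0 \\ * & \ad_x\vert_\lk \end{pmatrix}
  \quad\text{on } \g = \bR A\oplus\lk,
\end{equation*}
and since \( \lk \) is nilpotent the block \( \ad_x\vert_\lk \) is nilpotent, so \( \chi(x)=0 \); hence \( \chi \) is determined by its value on \( A \), and \( \chi(A) = \trace(\ad_A\vert_\lk) \) because \( \ad_A(A) = 0 \).

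Given this reduction, I would go through the families in Tables~\ref{tab:34sp} and~\ref{tab:5sp} one at a time. For each family the diagonal entries of \( \ad_A\vert_\lk \) in the basis \( e_2,\dots,e_n \) can be read off directly from the structure constants as the coefficients of \( e^{i1} \) in \( de^i \) for \( i\geqslant 2 \). This yields a linear polynomial \( T \) in the parameters, and the unimodular members are obtained by intersecting \( T = 0 \) with the restrictions already listed in the table.

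In dimensions two, three and four a quick inspection shows that \( T=0 \) always lands on an excluded value: for instance \( T(\lr_{3,\lambda}) = 1+\lambda \), \( T(\lr'_{3,\lambda}) = 2\lambda \), \( T(\lr_{4,\lambda(2)}) = 1+\lambda_1+\lambda_2 \), \( T(\ld_{4,\lambda}) = 2(1+\lambda) \), etc.; none of these values is allowed. In dimension five the same computation gives, for example, \( T(\lr_{5(1),\lambda})=1+3\lambda \), \( T(\lr_{5,\lambda(3)}) = 1+\lambda_1+\lambda_2+\lambda_3 \), \( T(\ld_{5(2),\lambda}) = 4+\lambda \), \( T(\lp_{5,\lambda}) = 4+3\lambda \), and each of these conditions is compatible with the remaining parameter restrictions. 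Conversely, in the families \( \lr_{5(2),\lambda} \), \( \lr''_{5,\lambda} \), \( \ld_{5(1)} \), \( \ld^{\pm}_{5(2)} \), \( \ld_{5(1),\lambda} \), \( \ld'^{\pm}_{5,\lambda} \), \( \lp_5 \) and the non-parametric \( \lr_5 \), \( \ld_{5(1)} \) the equation \( T=0 \) forces a forbidden value.

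The only obstacle is purely bookkeeping: for each of the roughly twenty families in Table~\ref{tab:5sp} one must evaluate a linear trace, solve it and compare with the displayed inequalities. Since the trace is always affine in the parameters, its zero locus is either empty (when it collapses to a forbidden value) or a hyperplane of codimension one in the parameter space, and the surviving algebras collected over all families give exactly the list in the statement.
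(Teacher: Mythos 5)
Your proposal is correct and matches the paper's (unstated, ``by direct inspection'') argument: unimodularity reduces to the single linear condition \( \trace(\ad_A\vert_{\lk})=0 \), whose diagonal entries are read off each structure description, and one checks family by family whether its zero locus meets the parameter restrictions of Tables~\ref{tab:34sp} and~\ref{tab:5sp}. Your sample traces all check out (e.g.\ \( 1+3\lambda \) for \( \lr_{5(1),\lambda} \), \( 4+3\lambda \) for \( \lp_{5,\lambda} \)); the only trivia are the duplicated mention of \( \ld_{5(1)} \) and that dimension one (\( \bR \), vacuously unimodular) should be noted alongside dimensions two to four.
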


\paragraph{Higher dimensions}

The quest for higher-dimensional examples is easily met.  Indeed, one
may construct infinite families of \( (2,3) \)-trivial Lie algebras
following the methods invoked in the proof of Proposition
\ref{prop:solb345}.  In fact all the families appearing in dimension
five have higher dimensional generalisations. Let us show how to
obtain the following examples:
\begin{asparaitem}
\item \( \lr_n \): \( (0,21{+}31,31{+}41,\dots,(n{-}1)1{+}n1,n1) \),
\item \( \lr_{n(k-1),\lambda} \): \( (0, 21{+}31, \dots,
  (k{-}1)1+k1,k1, \lambda.(k{+}1)1+(k{+}2)1, \dots, \lambda.(n{-}1)1 +
  n1,\allowbreak \lambda.n1) \), with \( k > 2 \) and \( \lambda \ne
  0,{-}1,-2,-1/2 \),
\item \( \lr_{n,\lambda(k)} \): \(
  (0,21,\lambda_1.31,\dots,\lambda_{k{-}1}.(k{+}1)1,\lambda_k.(k{+}2)1
  + (k{+}3)1,\dots,\lambda_k.(n{-}1)1 + n1,\allowbreak \lambda_k.n1)
  \), with \(n > k + 2 \) and non-zero \( \lambda_i \), \(
  1{+}\lambda_i \), \( \lambda_i{+}\lambda_j \), \( 1{+}2\lambda_k \),
  \( \lambda_i{+}2\lambda_k \), \( 1{+}\lambda_i{+}\lambda_j \) \(
  (i<j) \) and \( \lambda_i{+}\lambda_j{+}\lambda_{\ell} \) \(
  (i<j<\ell) \),
\item \( \ld_{n,\lambda(n-3)} \): \( (0, 21,\lambda_1.31, \dots,
  \lambda_{n{-}3}.(n{-}1)1, (1{+}\lambda_1).n1{+}32) \), with
  \(\lambda_i \ne 0,{-}1\) for all \(i\), \(\lambda_1\ne {-2}, {-}1/2,
  {-}\lambda_i, {-}1/2(1+\lambda_i), {-}2{-}\lambda_i,
  {-}\lambda_i{-}\lambda_j\) for \(1<i\), \(1<i<j\) and non-zero \(
  \lambda_i+\lambda_j \), \( 1{+}\lambda_i+\lambda_j \) \((1<i<j)\),
  \( \lambda_i{+}\lambda_j{+}\lambda_k \) \( (1<i<j<k) \).
\end{asparaitem}

The members of the \( \lr \)-series have \( \lk\cong\bR^{n-1} \) and
\( \ad_A \) belongs to the list \( J(n-1,1) \), \( J(k-1,1) \oplus
J(n-k,\lambda) \), \( \diag(1,\lambda_1,\dots,\lambda_{k-1}) \oplus
J(n-k-1,\lambda_k) \), where \( J(m,a) \) is an \( m\times m \)-Jordan
block with \( a \) on the diagonal and \( 1 \) immediately above the
diagonal.  The first matrix corresponds to \( \lr_n \), the second
corresponds to the families \( \lr_{n(k-1),\lambda} \) and the third
one gives \( \lr_{n,\lambda(k)} \).  For the latter two cases the
requirement that \( A \) must act invertibly on cohomology enforces
some restrictions on parameters.  As \( A \) acts on \(
H^1(\bR^{n-1})\cong\bR^{n-1} \) by a lower triangular matrix, these
restrictions are easily determined: the sum of one, two or three
diagonal elements must be non-zero.

The family \( \ld_{n,\lambda(n-3)} \) has \( \lk\cong(0^{n-2},21) \)
and \( \ad_A \) is \( \diag(1,\lambda_1, \dots, \lambda_{n-3},
1+\lambda_1) \).  Now \( A \) acts diagonally on \( \lk^* \), and
restrictions on parameters may therefore be read off directly from the
cohomology groups \( H^1(\lk) \cong \lk^* \ominus \Span{e^{n-1}}\),
\(H^2(\lk) \cong \Lambda^2\lk^* \ominus \Span{e^{12} ,
  e^{i(n-1)}\colon\, i>2}\), \(H^3(\lk) \cong \Lambda^3\lk^* \ominus
\Span{e^{12i},e^{jk(n-1)}\colon\, 2<i<n-1,2<j<k}\).

An alternative way of constructing infinite families of \( (2,3)
\)-trivial algebras goes via positive gradings of infinite families:
\begin{asparaitem}
\item \( \lf^1_n \): \( (0, 21, 31, 2.41 + 32, 3.51 + 42, \dots, (n -
  2).n1 + (n - 1)2) \),
\item \( \lf^2_n \): \( (0, 21, 2.31, 3.41 + 32, 4.51 + 42, 5.61 + 52
  + 43, \dots, (n - 1).n1+(n - 1)2+{(n - 2)3})\),
\item \( \lf^3_n \): \( (0, 21, 31, 2.41 + 32, \dots, (n - 3).(n - 1)1
  + (n - 2)2, (n - 2).n1 + (n - 1)2 - {(n - 1)3} +(n - 2)3 -
  \dotsb)\).
\end{asparaitem}
Here \( ({\lf^1_n})'=(0^2,21,\dots,(n-2)1) \) has positive grading \(
1^22\dotsb (n-2) \).  The derived algebra \( ({\lf^2_n})' =
(0^2,21,31,41+32,\dots,(n-2)1+(n-3)2) \) admits grading \( 12\dotsb
(n-1) \) and \( ({\lf^3_n})' = (0^2, 21, 31, \dots, (n-3)1,
(n-2)1-(n-2)2+(n-3)3 -\dots -(-1)^k(k+1)k) \) with \( n=2k+1 \) has
positive grading \( 1^223\dotsb (n-2) \).

\medbreak In conclusion, the above exposition shows that the class of
\((2,3)\)-trivial algebras is quite rich. Yet, because of
Theorem~\ref{thm:2-3-trivial}, this class is much more accessible than
the larger class of solvable algebras.

\paragraph*{Acknowledgements}
We gratefully acknowledge financial support from \textsc{ctqm} and
\textsc{geomaps}.  AFS is also partially supported by the Ministry of
Science and Innovation, Spain, under Project \textsc{mtm}{\small
  2008-01386} and thanks \textsc{nordita} for hospitality.

% \bibliography{papers} \bibliographystyle{plainnat}

\bigskip

\begin{small}
  \parindent0pt\parskip\baselineskip

  T.B.Madsen \& A.F.Swann

  Department of Mathematics and Computer Science, University of
  Southern Denmark, Campusvej 55, DK-5230 Odense M, Denmark

  \textit{and}

  CP\textsuperscript3-Origins, Centre of Excellence for Particle
  Physics Phenomenology, University of Southern Denmark, Campusvej 55,
  DK-5230 Odense M, Denmark.

  \textit{E-mail}: \url{tbmadsen@imada.sdu.dk},
  \url{swann@imada.sdu.dk}
\end{small}

\end{document}